\newtheorem{theorem}{Theorem}[section]
\newtheorem{lemma}[theorem]{Lemma}
\newtheorem{corollary}[theorem]{Corollary}
\numberwithin{equation}{section}
\newcommand{\C}{\mathbb{C}}
\newcommand{\N}{\mathbb{N}}
\newcommand{\R}{\mathbb{R}}
\newcommand{\cB}{\mathcal{B}}
\newcommand{\cM}{\mathcal{M}}
\newcommand{\fM}{\mathfrak{M}}
\newcommand{\eps}{\varepsilon}
\begin{document}

\setcounter{page}{1}

\title[Noncompactness of Fourier convolution operators]%
{Noncompactness of Fourier Convolution
Operators on Banach Function Spaces}

\author[C. A. Fernandes, A. Yu. Karlovich, \MakeLowercase{and} Yu. I. Karlovich]%
{Cl\'audio A. Fernandes,$^1$ Alexei Yu. Karlovich,$^2$ \MakeLowercase{and} Yuri I. Karlovich$^{3}$}

\address{$^{1}$
Centro de Matem\'atica e Aplica\c{c}\~oes,
Departamento de Matem\'atica,
Faculdade de Ci\^encias e Tecnologia,
Universidade Nova de Lisboa,
Quinta da Torre,
2829--516 Caparica,
Portugal.}
\email{\textcolor[rgb]{0.00,0.00,0.84}{caf@fct.unl.pt}}

\address{$^{2}$
Centro de Matem\'atica e Aplica\c{c}\~oes,
Departamento de Matem\'atica,
Faculdade de Ci\^encias e Tecnologia,
Universidade Nova de Lisboa,
Quinta da Torre,
2829--516 Caparica,
Portugal.}
\email{\textcolor[rgb]{0.00,0.00,0.84}{oyk@fct.unl.pt}}

\address{$^{3}$
Centro de Investigaci\'on en Ciencias,
Instituto de Investigaci\'on en Ciencias B\'asicas y Aplicadas,
Universidad Aut\'onoma del Estado de Morelos,
Av. Universidad 1001, Col. Chamilpa,
C.P. 62209 Cuernavaca, Morelos, 
M\'exico.}
\email{\textcolor[rgb]{0.00,0.00,0.84}{karlovich@uaem.mx}}


\subjclass[2010]{Primary 47G10; Secondary 46E30.}

\keywords{%
Fourier convolution operator,
compactness,
Banach function space,
Hardy-Littlewood maximal operator, and
Lebesgue space with Muckenhoupt weight.
}

\begin{abstract}
Let $X(\R)$ be a separable Banach function space such that
the Hardy-Littlewood maximal operator $M$ is bounded on $X(\R)$ and on its
associate space $X'(\R)$. Suppose $a$ is a Fourier multiplier on
the space $X(\R)$. We show that the Fourier convolution operator
$W^0(a)$ with symbol $a$ is compact on the space $X(\R)$ if and only
if $a=0$. This result implies that nontrivial Fourier convolution
operators on Lebesgue spaces with Muckenhoupt weights are never compact.
\end{abstract} \maketitle
\section{\textbf{Introduction}}
The set of all Lebesgue measurable complex-valued functions on $\R$ is denoted
by $\fM(\R)$. Let $\fM^+(\R)$ be the subset of functions in $\fM(\R)$ whose
values lie  in $[0,\infty]$. For a measurable set $E\subset\R$, 
its Lebesgue measure and the characteristic function are denoted by $|E|$ and
$\chi_E$, respectively. Following \cite[Chap.~1, Definition~1.1]{BS88}, a mapping
$\rho:\fM^+(\R)\to [0,\infty]$ is called a Banach function norm if,
for all functions $f,g, f_n \ (n\in\N)$ in $\fM^+(\R)$, for all
constants $a\ge 0$, and for all measurable subsets $E$ of $\R$,
the following properties hold:
\begin{eqnarray*}
{\rm (A1)} & & \rho(f)=0  \Leftrightarrow  f=0\ \mbox{a.e.}, \
\rho(af)=a\rho(f), \
\rho(f+g) \le \rho(f)+\rho(g),\\
{\rm (A2)} & &0\le g \le f \ \mbox{a.e.} \ \Rightarrow \ \rho(g)
\le \rho(f)
\quad\mbox{(the lattice property)},
\\
{\rm (A3)} & &0\le f_n \uparrow f \ \mbox{a.e.} \ \Rightarrow \
       \rho(f_n) \uparrow \rho(f)\quad\mbox{(the Fatou property)},\\
{\rm (A4)} & & |E|<\infty \Rightarrow \rho(\chi_E) <\infty,\\
{\rm (A5)} & & |E|<\infty \Rightarrow \int_E f(x)\,dx \le C_E\rho(f),
\end{eqnarray*}
where $C_E \in (0,\infty)$ may depend on $E$ and $\rho$ but is
independent of $f$. When functions differing only on a set of measure zero
are identified, the set $X(\R)$ of functions $f\in\fM(\R)$
for which $\rho(|f|)<\infty$ is called a Banach function space. For each
$f\in X(\R)$, the norm of $f$ is defined by
\[
\left\|f\right\|_{X(\R)} :=\rho(|f|).
\]
With this norm and under natural linear space operations, the set $X(\R)$ 
becomes a Banach space (see \cite[Chap.~1, Theorems~1.4 and~1.6]{BS88}). 
If $\rho$ is a Banach function norm, its associate norm $\rho'$ is defined on
$\fM^+(\R)$ by
\[
\rho'(g):=\sup\left\{
\int_{\R} f(x)g(x)\,dx \ : \ f\in \fM^+(\R), \ \rho(f) \le 1
\right\}, \quad g\in \fM^+(\R).
\]
By \cite[Chap.~1, Theorem~2.2]{BS88}, $\rho'$ is itself 
a Banach function norm.
The Banach function space $X'(\R)$ determined by the Banach function norm
$\rho'$ is called the associate space (K\"othe dual) of $X(\R)$.
The associate space $X'(\R)$ is naturally identified with a subspace of the (Banach) dual
space $[X(\R)]^*$.

Let $F:L^2(\R)\to L^2(\R)$ denote the Fourier transform
\[
(Ff)(x):=\widehat{f}(x):=\int_\R f(t)e^{itx}\,dt,
\quad
x\in\R,
\]
and let $F^{-1}:L^2(\R)\to L^2(\R)$ be the inverse of $F$,
\[
(F^{-1}g)(t)=\frac{1}{2\pi}\int_\R g(x)e^{-itx}\,d x,
\quad
t\in\R.
\]
It is well known that the Fourier convolution operator $W^0(a):=F^{-1}aF$
is bounded on the space $L^2(\R)$ for every $a\in L^\infty(\R)$.
Let $X(\R)$ be a separable Banach function space. Then $L^2(\R)\cap X(\R)$
is dense in $X(\R)$ (see Lemma~\ref{le:density} below). A function 
$a\in L^\infty(\R)$ is called a Fourier multiplier on $X(\R)$ if the 
convolution operator $W^0(a):=F^{-1}aF$ maps $L^2(\R)\cap X(\R)$ into 
$X(\R)$ and extends to a bounded linear operator on $X(\R)$. The function 
$a$ is called the symbol of the Fourier convolution operator $W^0(a)$. 
The set $\cM_{X(\R)}$ of all Fourier multipliers on  $X(\R)$ is a unital 
normed algebra under pointwise operations and the norm
\[
\left\|a\right\|_{\cM_{X(\R)}}:=\left\|W^0(a)\right\|_{\cB(X(\R))},
\]
where $\cB(X(\R))$ denotes the Banach algebra of all bounded linear operators
on the space $X(\R)$.

It is well known that the multiplication operator $aI$ by a function
$a\in L^\infty(\R)$ is compact on the space $L^2(\R)$ if and only if $a=0$
a.e. on $\R$ (see, e.g., \cite[Corollary~2.3.2]{SM93} for the case
of arbitrary nonatomic measure spaces). Since $A\mapsto F^{-1}AF$
for $A\in\cB(L^2(\R))$ is a similarity transformation, we see that the
Fourier convolution operator $W^0(a)$ cannot be compact on $L^2(\R)$ unless
its symbol $a$ is trivial.

The aim of this paper is to show that a nontrivial Fourier convolution
operator $W^0(a)$ with symbol $a\in \cM_{X(\R)}$ is never compact on a
Banach function space $X(\R)$ if the space $X(\R)$ satisfies a natural
condition formulated in terms of the boundedness of the Hardy-Littlewood
maximal operator $M$ on the space $X(\R)$ and on its associate space $X'(\R)$.

Recall that the (non-centered) Hardy-Littlewood maximal function $Mf$ of a
function $f\in L_{\rm loc}^1(\R)$ is defined by
\[
(Mf)(x):=\sup_{Q\ni x}\frac{1}{|Q|}\int_Q|f(y)|\,dy,
\]
where the supremum is taken over all intervals $Q\subset\R$ of finite length
containing $x$. The Hardy-Littlewood maximal operator $M$ defined by the rule 
$f\mapsto Mf$ is a sublinear operator.
\begin{theorem}[Main result]\label{th:main}
Let $X(\R)$ be a separable Banach function space such that the
Hardy-Littlewood maximal operator $M$ is bounded on $X(\R)$ and on its
associate space $X'(\R)$. Suppose that $a\in\cM_{X(\R)}$. Then the Fourier
convolution operator $W^0(a)$ is compact on the space $X(\R)$ if and only
if $a=0$ almost everywhere.
\end{theorem}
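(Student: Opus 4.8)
The ``if'' direction is trivial, since $W^0(0)=0$ is compact. For the converse, the plan is to assume that $W^0(a)$ is compact on $X(\R)$ and deduce $a=0$ a.e. First I would reduce the statement: because $C_c^\infty(\R)\subset L^2(\R)\cap X(\R)$ and, by Lemma~\ref{le:density}, $L^2(\R)\cap X(\R)$ is dense in $X(\R)$, it suffices to show that $W^0(a)\varphi=F^{-1}(a\widehat\varphi)=0$ for every $\varphi$ with $\widehat\varphi\in C_c^\infty(\R)$; this forces $a\widehat\varphi=0$ for all such $\varphi$ and hence $a=0$ a.e. I would then record the structural input furnished by the hypotheses: separability makes the norm of $X(\R)$ absolutely continuous, so $[X(\R)]^*=X'(\R)$, and, together with boundedness of $M$ on $X'(\R)$, the space $X(\R)$ is reflexive; boundedness of $M$ on both $X(\R)$ and $X'(\R)$ also yields boundedness of the Cauchy singular integral operator on $X(\R)$, hence a Stechkin-type estimate $\|W^0(b)\|_{\cB(X(\R))}\lesssim\|b\|_{L^\infty}+V(b)$ for $b$ of bounded variation; and any convolution $f\mapsto k*f$ with $|k|$ majorized by a radially decreasing integrable $\Phi$ obeys $|k*f|\le c\|\Phi\|_{L^1}Mf$, hence is bounded on $X(\R)$ with norm controlled by $\|\Phi\|_{L^1}$. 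Finally, multiplication by a unimodular function is an isometric automorphism of $X(\R)$, and $\lambda\mapsto e_\lambda:=e^{i\lambda\cdot}I$ is strongly continuous on $X(\R)$ by dominated convergence in a space with absolutely continuous norm.

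Next I would handle the behaviour of $a$ at infinity. For $f\in X(\R)$ and $g\in X'(\R)$ one has $fg\in L^1(\R)$, so $\int_\R e^{i\lambda t}f(t)g(t)\,dt\to0$ as $|\lambda|\to\infty$ by Riemann--Lebesgue, i.e.\ $e_\lambda f\rightharpoonup0$ weakly in $X(\R)$. Since a compact operator carries weakly null sequences to norm null ones, and a direct computation gives $W^0(a)e_\lambda=e_\lambda W^0(a(\cdot-\lambda))$, we obtain $\|W^0(a(\cdot-\lambda))f\|_{X(\R)}\to0$ for every $f\in X(\R)$ as $|\lambda|\to\infty$; moreover every $W^0(a(\cdot-\lambda))$ is again compact, with the same $\cB(X(\R))$-norm as $W^0(a)$.

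The heart of the argument is to pass from $\pm\infty$ to finite frequencies. Suppose $a\ne0$ on a set of positive measure and fix a Lebesgue point $\xi_0$ of $a$ (and of $|a|^2$) with $a(\xi_0)\ne0$; conjugating $W^0(a)$ by a suitable isometric modulation reduces to $\xi_0=0$. Choosing $\psi\in C_c^\infty(\R)$ with $\psi\equiv1$ near $0$ and $\psi_\eps:=\psi(\cdot/\eps)$, the operators $W^0(a\psi_\eps)=W^0(a)W^0(\psi_\eps)$ are compact and, since $V(\psi_\eps)=V(\psi)$, uniformly bounded in $\eps$. On the infinite-dimensional subspace $Y_\eps:=\{\varphi\in X(\R):\operatorname{supp}\widehat\varphi\subset[-\eps,\eps]\}$ (which lies in $X(\R)$ because $F^{-1}\chi_I\in X(\R)$ for bounded intervals $I$, by boundedness of $M$) one writes
\[
W^0(a)\varphi=a(0)\varphi+W^0\bigl(a-a(0)\bigr)\varphi ,
\]
and the technical core is the estimate $\|W^0(a-a(0))\varphi\|_{X(\R)}\le\delta(\eps)\|\varphi\|_{X(\R)}$ for all $\varphi\in Y_\eps$ with $\delta(\eps)\to0$; I expect this to follow by combining the membership $a\in\cM_{X(\R)}$ with a regularization of $a-a(0)$ at its Lebesgue point that renders the relevant smoothed convolution kernel small in $L^1(\R)$ — exploiting $\tfrac{1}{\eps}\int_{-\eps}^{\eps}|a-a(0)|\to0$ and $\tfrac{1}{\eps}\int_{-\eps}^{\eps}|a-a(0)|^2\to0$ — and then applying the maximal-function domination recorded above (the $L^\infty$-norm of the symbol being of no use on $X(\R)$). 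Granting it, for $\eps$ small enough $\|W^0(a)\varphi\|_{X(\R)}\ge\tfrac{1}{2}|a(0)|\,\|\varphi\|_{X(\R)}$ on $Y_\eps$, so $W^0(a)$ is bounded below on an infinite-dimensional subspace and therefore not compact — a contradiction. Hence $a$ vanishes at every one of its Lebesgue points, i.e.\ $a=0$ a.e.

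The main obstacle is precisely this last step. Because $X(\R)$ need be neither translation- nor dilation-invariant, the classical devices — transporting a near-extremal function to infinity, or rescaling to localize a frequency — are unavailable, and the only isometries in sight, the modulations $e_\lambda$, detect the symbol only near $\pm\infty$. Everything therefore hinges on controlling a Fourier convolution operator whose symbol is rough and only small in an averaged sense, on a space where the $L^\infty$-bound for multipliers fails; this is exactly where the assumptions on the Hardy--Littlewood maximal operator enter, through the Stechkin inequality and through the majorization of convolution operators by $M$.
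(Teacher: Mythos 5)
Your argument funnels entirely into the unproved estimate $\|W^0(a-a(\xi_0))\varphi\|_{X(\R)}\le\delta(\eps)\|\varphi\|_{X(\R)}$ for all $\varphi$ with $\operatorname{supp}\widehat\varphi\subset[\xi_0-\eps,\xi_0+\eps]$, and this is a genuine gap: not only is it left at the level of ``I expect this to follow,'' but as stated it is false even on $L^2(\R)$. Take $a=\chi_E$ where $E$ has Lebesgue density $1$ at $0$ while $|[-\eps,\eps]\setminus E|>0$ for every $\eps>0$; then $0$ is a Lebesgue point of $a$ and of $|a|^2$ with $a(0)=1$, yet by Plancherel the norm of $W^0(a-a(0))$ on $Y_\eps$ equals $\operatorname*{ess\,sup}_{|\xi|\le\eps}|a(\xi)-1|=1$ for every $\eps$, and $W^0(a)\varphi=0$ for any nonzero $\varphi$ with $\widehat\varphi$ supported in $[-\eps,\eps]\setminus E$, so $W^0(a)$ is not bounded below on $Y_\eps$ either. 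The point is that Lebesgue-point (averaged) smallness of $a-a(\xi_0)$ cannot control the multiplier norm of $(a-a(\xi_0))\chi_{[\xi_0-\eps,\xi_0+\eps]}$, no matter how one regularizes; the natural repair, restricting to $\varphi$ with $\widehat\varphi$ supported in $\{\xi:|a(\xi)-a(\xi_0)|<|a(\xi_0)|/2\}$, is unavailable because characteristic functions of arbitrary measurable sets need not be Fourier multipliers on $X(\R)$ (nor on $L^p$, $p\ne2$). A side remark: reflexivity of $X(\R)$ does not follow from your hypotheses (e.g.\ a Lorentz-type space $X$ with $M$ bounded on $X$ and $X'$ can have a nonreflexive associate space), but you only use $[X(\R)]^*=X'(\R)$, which does follow from separability, so that slip is harmless.

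What is missing is precisely the mechanism the paper uses, which runs in the opposite direction and never needs a lower bound on a subspace. Theorem~\ref{th:strong-convergence-to-zero} shows that $W^0(\chi_{t,\delta_n(t)})\to0$ strongly as $\delta_n(t)\to0$ (integration by parts on Schwartz functions, domination of the tail by $M\chi_{0,1}$, and absolute continuity of the norm); compactness of $W^0(a)$ then upgrades this to norm convergence $\|W^0(\chi_{t,\delta_n(t)}a)\|_{\cB(X(\R))}=\|W^0(\chi_{t,\delta_n(t)})W^0(a)\|_{\cB(X(\R))}\to0$. The decisive ingredient is Theorem~\ref{th:continuous-embedding}, the embedding $\|b\|_{L^\infty(\R)}\le\|b\|_{\cM_{X(\R)}}$ valid under the maximal-operator hypotheses, which converts operator-norm smallness of the localized pieces $W^0(\chi_{\gamma}a)$ into pointwise smallness of $a$ on $\gamma$; a finite covering of an arbitrary segment then gives $a=0$ a.e. Your proposal lacks any substitute for this $L^\infty$-embedding, and its replacement strategy (bounding $W^0(a)$ below near a Lebesgue point) fails for the reason above, so the proof as written does not go through.
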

As a consequence of Theorem~\ref{th:main}, we formulate a corollary for the
case of weighted Lebesgue spaces $L^p(\R,w)$. A measurable function  
$w:\R\to[0,\infty]$ is called a weight if the preimage  $w^{-1}(\{0,\infty\})$ 
of the set $\{0,\infty\}$ has measure zero. Let $1<p<\infty$.  The weighted 
Lebesgue space
\[
L^p(\R,w)=\{f\in\fM(\R):fw\in L^p(\R)\}
\]
is equipped with the norm
\[
\|f\|_{L^p(\R,w)}=\left(\int_\R|f(x)|^pw^p(x)\,dx\right)^{1/p}.
\]
By the well-known Muckenhoupt theorem (see, e.g., \cite[Theorem~2]{M72},
\cite[Theorem~I]{CF74} and \cite[Chap. IV, Theorem~2.8]{GR85}), 
the Hardy-Littlewood maximal operator
$M$ is bounded on the weighted Lebesgue space $L^p(\R,w)$ if and only
if the weight $w$ belongs to the Muckenhoupt class $A_p(\R)$, that is, if
$w\in L_{\rm loc}^p(\R)$, $w^{-1}\in L_{\rm loc}^{p'}(\R)$ and
\begin{equation}\label{eq:Ap-real-line}
\sup_Q
\left(\frac{1}{|Q|}\int_Q w^p(x)\,dx\right)^{1/p}
\left(\frac{1}{|Q|}\int_Q w^{-p'}(x)\,dx\right)^{1/p'}
<\infty,
\end{equation}
where $1/p+1/p'=1$ and the supremum is taken over all intervals $Q\subset\R$
of finite length $|Q|$. Since $w\in L_{\rm loc}^p(\R)$ and 
$w^{-1}\in L_{\rm loc}^{p'}(\R)$, the weighted Lebesgue space $L^p(\R,w)$ is 
a separable Banach function space and $L^{p'}(\R,w^{-1})$ is its associate 
space (see, e.g., \cite[Lemma~2.4]{KS14}). It follows immediately from 
\eqref{eq:Ap-real-line} that $w\in A_p(\R)$ if and only if  
$w^{-1}\in A_{p'}(\R)$. Thus $w\in A_p(\R)$ is also equivalent to the 
boundedness of the Hardy-Littlewood maximal operator $M$ on the space 
$L^{p'}(\R,w^{-1})$. Combining these observations with Theorem~\ref{th:main}, 
we arrive at the following new result.
\begin{corollary}
Let $1<p<\infty$ and $w\in A_p(\R)$. Suppose $a\in\cM_{L^p(\R,w)}$. Then the
Fourier convolution operator $W^0(a)$ is compact on the weighted Lebesgue
space $L^p(\R,w)$ if and only if $a=0$ almost everywhere.
\end{corollary}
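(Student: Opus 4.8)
The plan is to obtain the corollary as an immediate specialization of Theorem~\ref{th:main}, so that the whole task reduces to verifying that the weighted Lebesgue space $X(\R)=L^p(\R,w)$ meets the three hypotheses of that theorem: that it is a separable Banach function space, and that the Hardy-Littlewood maximal operator $M$ is bounded on $X(\R)$ and on its associate space $X'(\R)$. All of the ingredients have in fact been assembled in the discussion preceding the statement; the proof consists in stringing them together and invoking Theorem~\ref{th:main}. The backward implication is immediate, since $a=0$ almost everywhere gives $W^0(a)=F^{-1}\cdot 0\cdot F=0$, and the zero operator is compact.

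For the structural hypothesis I would cite \cite[Lemma~2.4]{KS14}, as already indicated above: because $w\in A_p(\R)$ entails $w\in L^p_{\rm loc}(\R)$ and $w^{-1}\in L^{p'}_{\rm loc}(\R)$, the space $L^p(\R,w)$ is a separable Banach function space, and its associate space is $L^{p'}(\R,w^{-1})$. The boundedness of $M$ on $L^p(\R,w)$ itself is then exactly the Muckenhoupt theorem quoted above (\cite[Theorem~2]{M72}, \cite[Theorem~I]{CF74}, \cite[Chap.~IV, Theorem~2.8]{GR85}) applied to the hypothesis $w\in A_p(\R)$.

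The single point that calls for a separate remark is the boundedness of $M$ on the associate space $L^{p'}(\R,w^{-1})$. Here I would use the symmetry of the Muckenhoupt condition \eqref{eq:Ap-real-line}: since $(p')'=p$ and $(w^{-1})^{-(p')'}=w^p$, swapping the two factors shows that \eqref{eq:Ap-real-line} for the pair $(w,p)$ coincides with the corresponding condition for the pair $(w^{-1},p')$, whence $w\in A_p(\R)$ if and only if $w^{-1}\in A_{p'}(\R)$. A second application of the Muckenhoupt theorem, now with exponent $p'$ and weight $w^{-1}$, yields the boundedness of $M$ on $L^{p'}(\R,w^{-1})$. With all three hypotheses checked, Theorem~\ref{th:main} delivers the forward implication, and hence the stated equivalence. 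There is no genuine obstacle: the substance resides entirely in Theorem~\ref{th:main}, and the only nonformal step is the $A_p(\R)\Leftrightarrow A_{p'}(\R)$ duality needed to transfer boundedness of $M$ to the associate space.
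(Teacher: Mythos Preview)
Your proposal is correct and follows essentially the same route as the paper: you reduce the corollary to Theorem~\ref{th:main} by checking that $L^p(\R,w)$ is a separable Banach function space with associate space $L^{p'}(\R,w^{-1})$ (via \cite[Lemma~2.4]{KS14}), and that $M$ is bounded on both spaces using Muckenhoupt's theorem together with the duality $w\in A_p(\R)\Leftrightarrow w^{-1}\in A_{p'}(\R)$. This is precisely the argument given in the paragraph preceding the corollary in the paper.
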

This result can be extended to the setting of weighted variable Lebesgue
spaces $L^{p(\cdot)}(\R,w)$ because a generalization of Muckenhoupt's
theorem for these spaces is available (see \cite[Theorem~1.3]{CDH11}
and \cite[Theorem~1.5]{CFN12}).

The paper is organized as follows. In Section~\ref{sec:auxiliary}, we collect
essentially known auxiliary results about the density of nice functions
in separable Banach function spaces, the Stechkin type inequality for
Fourier multipliers on Banach functions spaces, and the continuous
embedding of the algebra $\cM_{X(\R)}$ into the space $L^\infty(\R)$.

In Section~\ref{sec:proofs}, we prove that the sequence of 
convolution operators $\{W^0(\chi_n)\}_{n\in\N}$ tends strongly to the zero 
operator on the space $X(\R)$ as $n\to\infty$ if their symbols $\chi_n$ are 
characteristic functions of intervals of the form 
$[t-\delta_n(t),t+\delta_n(t)]$ with a fixed $t\in\R$ and some sequence of 
positive numbers $\{\delta_n(t)\}_{n\in\N}$ such that $\delta_n(t)\to 0$ as 
$n\to\infty$. For a compact operator $W^0(a)$, this implies that the sequence
$\{W^0(\chi_n a)\}_{n\in\N}$ tends to the zero operator uniformly. This fact
combined with the continuous embedding of $\cM_{X(\R)}$ into $L^\infty(\R)$
shows that the $L^\infty$-norm of $a$ is ``locally infinitesimal" near
each point $t\in\R$. Finally, a compactness argument shows that the
$L^\infty$-norm of $a$ is ``globally infinitesimal", whence $a=0$ a.e.
on $\R$, which completes the proof of Theorem~\ref{th:main}.
\section{\textbf{Auxiliary results}}\label{sec:auxiliary}
\subsection{Separability, absolute continuity of the norm and density of
\boldmath{$C_0^\infty(\R)$} in Banach function spaces}
Following \cite[Chap.~1, Definition~3.1]{BS88},
a function $f$ in a Banach function space $X(\R)$ is said to have absolutely
continuous norm in $X(\R)$ if $\|f\chi_{E_n}\|_{X(\R)}\to 0$ for every
sequence $\{E_n\}_{n\in\N}$ of measurable sets on $\R$ satisfying
$\chi_{E_n}\to 0$ a.e. on $\R$ as $n\to\infty$. Notice that the sets $E_n$
are not required to have finite measure. If all functions $f\in X(\R)$ have
absolutely continuous norm in $X(\R)$, then the space $X(\R)$ itself is said 
to have absolutely continuous norm.

From \cite[Chap.~1, Corollary~5.6]{BS88}, one can extract the following.
\begin{lemma}\label{le:absolute-coninuity-versus-separability}
A Banach function space $X(\R)$ is separable if and only if it has absolutely
continuous norm.
\end{lemma}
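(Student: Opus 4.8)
The plan is to derive the lemma from \cite[Chap.~1, Corollary~5.6]{BS88}. That corollary asserts that a Banach function space over a $\sigma$-finite measure space is separable if and only if two conditions hold simultaneously: the underlying measure is \emph{separable}, and the space has absolutely continuous norm. Lebesgue measure on $\R$ is plainly $\sigma$-finite, so the whole argument reduces to the observation that it is also separable; once this is established, the first condition in Corollary~5.6 becomes vacuous and the asserted equivalence is exactly what remains.

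To see that Lebesgue measure on $\R$ is separable, recall that this means the metric space whose points are the measurable subsets of $\R$ of finite measure, identified when their symmetric difference is null, and whose metric is $d(E,F):=\big|(E\setminus F)\cup(F\setminus E)\big|$, possesses a countable dense subset. Such a subset is provided by the collection of all finite unions of bounded open intervals with rational endpoints. Indeed, given a set $E$ of finite measure and $\eps>0$, outer regularity of Lebesgue measure furnishes an open set $U\supseteq E$ with $|U|<|E|+\eps$, hence $|U\setminus E|<\eps$ and $|U|<\infty$; writing $U$ as a countable disjoint union of open intervals of summable lengths, a finite subunion approximates $U$ in $d$ to within any prescribed error, and perturbing the finitely many endpoints to nearby rationals changes the set by arbitrarily little in $d$. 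Combining these three approximations shows that $E$ lies within $d$-distance $\eps$ of a finite union of rational open intervals, so Lebesgue measure on $\R$ is separable.

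Feeding this into \cite[Chap.~1, Corollary~5.6]{BS88} gives precisely the statement of the lemma. For orientation, the content of that corollary splits as follows. The implication ``absolutely continuous norm $\Rightarrow$ separable'' is the routine one: under that hypothesis the simple functions supported on sets of finite measure are dense in $X(\R)$ by the theory of absolutely continuous norms in \cite[Chap.~1, Section~3]{BS88}, and replacing the defining sets of such simple functions by members of the countable family constructed above and their coefficients by Gaussian rationals yields a countable dense subset of $X(\R)$. The reverse implication ``separable $\Rightarrow$ absolutely continuous norm'' is where the real work lies: if some $f\in X(\R)$ failed to have absolutely continuous norm, one would produce sets shrinking to a null set on which $\|f\chi_{E_n}\|_{X(\R)}$ stays bounded below, and from these manufacture an uncountable family of elements of $X(\R)$ that are pairwise separated in norm, contradicting separability.

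The main obstacle, accordingly, is not present in this lemma proper but is packaged inside the cited Corollary~5.6; the only genuine step here is the elementary verification that Lebesgue measure on $\R$ is separable, which is standard. The proof will therefore be brief, consisting of that verification together with the citation.
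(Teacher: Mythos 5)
Your proposal is correct and follows the same route as the paper, which simply extracts the lemma from \cite[Chap.~1, Corollary~5.6]{BS88}; your added verification that Lebesgue measure on $\R$ is separable is exactly the standard detail needed to make that extraction explicit.
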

As usual, let $C_0^\infty(\R)$ denote the set of all infinitely differentiable
compactly supported functions on $\R$.
\begin{lemma}\label{le:density}
If $X(\R)$ is a separable Banach function space, then the sets $C_0^\infty(\R)$
and $L^2(\R)\cap X(\R)$ are dense in $X(\R)$.
\end{lemma}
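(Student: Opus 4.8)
The plan is to prove Lemma~\ref{le:density} in two stages: first establish density of $C_0^\infty(\R)$ in $X(\R)$, and then deduce density of $L^2(\R)\cap X(\R)$ as an immediate consequence, since $C_0^\infty(\R)\subset L^2(\R)\cap X(\R)\subset X(\R)$.

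For the first stage, I would argue as follows. Fix $f\in X(\R)$ and $\eps>0$. Using Lemma~\ref{le:absolute-coninuity-versus-separability}, separability of $X(\R)$ gives that $X(\R)$ has absolutely continuous norm. Applying this to the sets $E_n=\{x\in\R:|x|>n\}$ (on which $\chi_{E_n}\to 0$ a.e.), we get $\|f\chi_{E_n}\|_{X(\R)}\to 0$, so we may first reduce to a function supported on a finite interval. Next, apply absolute continuity again to the sets $E_n=\{x:|f(x)|>n\}$, whose measure is finite by (A5) and which shrink to a null set; this lets us replace $f$ by a bounded function $g$ with support in a finite interval $[-N,N]$, with $\|f-g\|_{X(\R)}$ as small as we wish. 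Now $g\in L^\infty(\R)$ with compact support, hence $g\in L^1(\R)$, and we can approximate $g$ in $L^1(\R)$ by functions $h\in C_0^\infty(\R)$ supported in a slightly larger fixed bounded interval $[-N-1,N-1]$, say, with $\|g-h\|_{L^1(\R)}$ and $\|g-h\|_{L^\infty(\R)}$ both controlled (e.g.\ via mollification, which converges in $L^1$; one can also truncate to keep the $L^\infty$ bound). The final step is to translate $L^1$ (or $L^\infty$) closeness on a fixed finite interval into $X(\R)$ closeness. For this, write $\|g-h\|_{X(\R)}=\rho(|g-h|)\le \|g-h\|_{L^\infty(\R)}\,\rho(\chi_{[-N-1,N-1]})$, using the lattice property (A2) and (A1); the quantity $\rho(\chi_{[-N-1,N-1]})=\|\chi_{[-N-1,N-1]}\|_{X(\R)}$ is finite by (A4). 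So making $\|g-h\|_{L^\infty}$ small does the job, and combining the three approximation steps yields a function in $C_0^\infty(\R)$ within $\eps$ of $f$.

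I should be slightly careful about the order: the mollification step most naturally produces $L^1$ (and, for bounded $g$, also uniformly bounded, and $L^\infty$ convergence may fail at discontinuities) convergence, so it is cleanest to either (i) first approximate the bounded compactly supported $g$ by a continuous compactly supported function in sup-norm (possible by Lusin's theorem combined with Tietze, keeping the sup-norm bound), then mollify that to land in $C_0^\infty(\R)$ with uniform convergence; or (ii) just use the $L^1$-convergence of mollifiers together with the control $\int_E|g-h|\le C_E\rho(|g-h|)$ reversed—no, that inequality goes the wrong way. Approach (i) is the safe route: on a fixed finite interval, uniform approximation transfers to $X(\R)$-norm approximation via (A2) and (A4) as above. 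That is the argument I would write.

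The main obstacle, though it is minor, is that a general Banach function space has no density of smooth or even continuous functions built in—properties (A1)--(A5) alone are not enough—so separability (equivalently, absolute continuity of the norm, via Lemma~\ref{le:absolute-coninuity-versus-separability}) must be invoked precisely to justify the two truncation reductions (to finite support and to bounded functions). Once those reductions are in place, everything reduces to classical approximation on a fixed bounded interval plus the elementary bound $\|u\|_{X(\R)}\le\|u\|_{L^\infty(\R)}\|\chi_I\|_{X(\R)}$ for $u$ supported in a finite interval $I$. The statement about $L^2(\R)\cap X(\R)$ then requires no extra work, since $C_0^\infty(\R)$ is contained in it.
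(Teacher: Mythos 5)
Your overall plan (truncate in space, then in magnitude, using absolute continuity of the norm obtained from Lemma~\ref{le:absolute-coninuity-versus-separability}, then approximate on a fixed bounded interval and use $\|u\|_{X(\R)}\le\|u\|_{L^\infty(\R)}\|\chi_I\|_{X(\R)}$, finally getting $L^2(\R)\cap X(\R)$ for free from $C_0^\infty(\R)\subset L^2(\R)\cap X(\R)$) is sound, and it is more self-contained than the paper's proof, which simply quotes \cite[Lemma~2.10(b)]{KS14} for the density of $C_0^\infty(\R)$ in spaces with absolutely continuous norm and treats $L^2(\R)\cap X(\R)$ via the intersection space with the max-norm. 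However, your argument has a genuine gap at the decisive step: you claim that the bounded, compactly supported measurable function $g$ can be approximated \emph{in sup-norm} by continuous compactly supported functions ``by Lusin's theorem combined with Tietze.'' This is false. A uniform limit of continuous functions is continuous, so for $g=\chi_{[0,1]}$ one has $\|g-h\|_{L^\infty(\R)}\ge 1/2$ for every continuous $h$; Lusin's theorem only produces a continuous $h$ with $\|h\|_{L^\infty(\R)}\le\|g\|_{L^\infty(\R)}$ that agrees with $g$ outside a set $E$ of small measure, not one that is uniformly close. Since route (ii) was (correctly) discarded because (A5) gives $\int_E|g-h|\le C_E\rho(|g-h|)$, i.e.\ the wrong direction, your written argument has no valid bridge from $g$ to smooth functions in the $X(\R)$-norm.

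The repair requires one more use of absolute continuity, which your outline omits: after Lusin you get $\|g-h\|_{X(\R)}\le 2\|g\|_{L^\infty(\R)}\|\chi_E\|_{X(\R)}$ with $E$ contained in a fixed bounded interval $I$ and $|E|$ as small as you like, so you must know that $\|\chi_E\|_{X(\R)}\to 0$ as $|E|\to 0$, $E\subset I$. This follows from the absolutely continuous norm of $\chi_I$: if $\|\chi_{E_n}\|_{X(\R)}\ge\eps$ with $|E_n|<2^{-n}$, put $F_n=\bigcup_{k\ge n}E_k$; then $\chi_{F_n}\to 0$ a.e.\ while $\|\chi_I\chi_{F_n}\|_{X(\R)}\ge\eps$, a contradiction. (Alternatively, approximate $g$ uniformly by simple functions, which is legitimate, and then approximate characteristic functions of measurable subsets of $I$ by functions in $C_0^\infty(\R)$ using regularity of Lebesgue measure together with the same small-measure estimate.) Once $h$ is continuous with compact support, mollification does converge uniformly and finishes the proof as you indicate; likewise your reduction of the second assertion to $C_0^\infty(\R)\subset L^2(\R)\cap X(\R)\subset X(\R)$ is correct. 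Two minor points: the finiteness of $|\{|f|>n\}|$ is immediate once you have already truncated to a bounded support, and is not what (A5) asserts; and what you actually need there is only that these sets shrink to the null set $\{|f|=\infty\}$, so that absolute continuity applies.
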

\begin{proof}
Since $X(\R)$ is separable, it has absolutely continuous norm in view of
Lemma~\ref{le:absolute-coninuity-versus-separability}. It is easy to see that
the space $Y(\R)=L^2(\R)\cap X(\R)$ equipped with the norm
\[
\|f\|_{Y(\R)}=\max\{\|f\|_{L^2(\R)},\|f\|_{X(\R)}\}
\]
has absolutely continuous norm, too. Then $C_0^\infty(\R)$ is dense in $Y(\R)$ 
and in $X(\R)$ due to \cite[Lemma~2.10(b)]{KS14}. Since 
$C_0^\infty(\R)\subset Y(\R)\subset X(\R)$, we conclude that $Y(\R)$ is dense 
in $X(\R)$.
\end{proof}
\subsection{Convolution operators with symbols in the algebra \boldmath{$V(\R)$}}
Suppose that $a:\R\to\C$ is a function of finite total variation $V(a)$ given 
by
\[
V(a):=\sup \sum_{k=1}^n |a(x_k)-a(x_{k-1})|,
\]
where the supremum is taken over all partitions of $\R$ of the form
\[
-\infty<x_0<x_1<\dots<x_n<+\infty
\]
with $n\in\N$. The set $V(\R)$ of all functions of finite total variation
on $\R$ with the norm
\[
\|a\|_V:=\|a\|_{L^\infty(\R)}+V(a)
\]
is a unital non-separable Banach algebra.
\begin{theorem}\label{th:Stechkin}
Let $X(\R)$ be a separable Banach function space such that the
Hardy-Littlewood maximal operator $M$ is bounded on $X(\R)$ and on its
associate space $X'(\R)$. If a function $a:\R\to\C$ has a finite total 
variation $V(a)$, then the convolution operator $W^0(a)$ is bounded on 
the space $X(\R)$ and
\begin{equation}\label{eq:Stechkin}
\|W^0(a)\|_{\cB(X(\R))}
\le
c_{X}\|a\|_V
\end{equation}
where $c_{X}$ is a positive constant depending only on $X(\R)$.
\end{theorem}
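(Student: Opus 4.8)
\emph{Strategy.} The plan is to exhibit $W^0(a)$, for a symbol $a$ of bounded variation, as a superposition of the model operators $W^0(\chi_{(\tau,+\infty)})$, $\tau\in\R$, each bounded on $X(\R)$ \emph{uniformly} in $\tau$. Once this is done, the inequality \eqref{eq:Stechkin} reduces to a single nontrivial input, the boundedness on $X(\R)$ of the Cauchy singular integral operator, plus routine bookkeeping.

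\emph{The model operators.} First I would record the needed bound for the model symbols. Let $S_\R$ denote the Cauchy singular integral operator, $(S_\R\varphi)(x)=\frac{1}{\pi i}\,\mathrm{p.v.}\!\int_\R\frac{\varphi(t)}{t-x}\,dt$. With the Fourier transform normalised as in the paper one has $W^0(\operatorname{sgn})=\pm S_\R$, hence $W^0(\chi_{(0,+\infty)})=\frac12(I\pm S_\R)$. Since $M$ is bounded on $X(\R)$ and on $X'(\R)$, the operator $S_\R$ is bounded on $X(\R)$ by the classical boundedness criterion for the Cauchy singular integral (equivalently, the Hilbert transform) on Banach function spaces — which follows from the weighted $L^p$ estimates for Calder\'on--Zygmund operators together with the Rubio de Francia extrapolation theorem — so $W^0(\chi_{(0,+\infty)})\in\cB(X(\R))$. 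Next, writing $e_\lambda(x):=e^{i\lambda x}$ and using $\chi_{(\tau,+\infty)}=\chi_{(0,+\infty)}(\cdot-\tau)$, a direct Fourier-transform computation gives
\[
W^0(\chi_{(\tau,+\infty)})=e_{-\tau}I\cdot W^0(\chi_{(0,+\infty)})\cdot e_{\tau}I,\qquad\tau\in\R.
\]
Because multiplication by the unimodular function $e_\lambda$ is an isometry of $X(\R)$ (the norm of $X(\R)$ depends only on the modulus of a function), this yields $\|W^0(\chi_{(\tau,+\infty)})\|_{\cB(X(\R))}=d_X$ for every $\tau$, where $d_X:=\|W^0(\chi_{(0,+\infty)})\|_{\cB(X(\R))}$.

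\emph{Assembling the estimate.} A function $a\in V(\R)$ is bounded, has finite limits at $\pm\infty$ and at most countably many discontinuities, and $V$ is additive over adjacent intervals; this suggests the representation $a=a(-\infty)+\int_\R\chi_{[\tau,+\infty)}\,da(\tau)$ against the Lebesgue--Stieltjes measure $da$, whose total variation is $V(a)$. Rather than integrate operator-valued functions directly, I would approximate $a$ by step functions $a_n$ — finite linear combinations of $1$ and functions $\chi_{[\tau,+\infty)}$ built from partitions of $[-n,n]$ of mesh tending to $0$ avoiding the jumps of $a$, suitably constant outside $[-n,n]$ — arranged so that $a_n\to a$ pointwise off the discontinuity set of $a$, $\|a_n\|_{L^\infty(\R)}\le\|a\|_{L^\infty(\R)}$, and the sum of the absolute jumps of $a_n$ is at most $V(a)$. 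For each $n$ the preceding paragraph gives $W^0(a_n)\in\cB(X(\R))$ with $\|W^0(a_n)\|_{\cB(X(\R))}\le\|a\|_{L^\infty(\R)}+d_X V(a)\le(1+d_X)\|a\|_V$. Then I would fix $f\in L^2(\R)\cap X(\R)$: since $a_n\widehat f\to a\widehat f$ in $L^2(\R)$ by dominated convergence (dominant $\|a\|_{L^\infty(\R)}|\widehat f|$), we get $W^0(a_n)f\to W^0(a)f$ in $L^2(\R)$, hence a.e.\ along a subsequence; as $\sup_n\|W^0(a_n)f\|_{X(\R)}\le(1+d_X)\|a\|_V\|f\|_{X(\R)}<\infty$, the Fatou property (A3) of $X(\R)$ (in the form: an a.e.\ limit of a norm-bounded sequence belongs to $X(\R)$, with norm at most the lower limit) gives $W^0(a)f\in X(\R)$ together with the bound. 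Since $L^2(\R)\cap X(\R)$ is dense in $X(\R)$ by Lemma~\ref{le:density}, $W^0(a)$ extends to an operator in $\cB(X(\R))$ satisfying \eqref{eq:Stechkin} with $c_X=1+d_X$.

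\emph{Main obstacle.} The one genuinely substantial ingredient is the boundedness of $S_\R$ on $X(\R)$: this is exactly where the two-sided hypothesis on the maximal operator enters, and it rests on the full machinery of weighted norm inequalities / extrapolation. Everything else — the modulation identity, the elementary variation estimates for the step functions $a_n$, and the Fatou-type closure argument in the Banach function space — is routine, the only care needed being to keep the bookkeeping of variations and the ``avoid the countably many jumps'' construction clean.
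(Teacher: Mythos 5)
Your proposal is correct, but it takes a genuinely different route from the paper: the paper does not prove Theorem~\ref{th:Stechkin} at all, it simply quotes \cite[Theorem~4.3]{K15}, whose proof goes through the boundedness of maximally modulated singular integral operators (a Carleson-type operator) obtained via weighted estimates and Rubio de Francia extrapolation. You instead run the classical Stechkin--Duduchava argument directly in $X(\R)$: the single hard input is $S\in\cB(X(\R))$, which under the stated hypotheses is exactly \cite[Theorem~3.8]{KS14} -- the same fact the paper already invokes to prove Theorem~\ref{th:continuous-embedding} -- and then the conjugation identity $W^0(\chi_{(\tau,+\infty)})=e_{-\tau}I\,W^0(\chi_{(0,+\infty)})\,e_{\tau}I$ together with the observation that unimodular multiplications are isometries of any Banach function space gives a $\tau$-uniform bound; the step-function approximation replaces the operator-valued Stieltjes integral (and thus avoids any need for a pointwise supremum over modulations), and the passage to the limit via $L^2$-convergence, an a.e.\ subsequence, and the Fatou lemma for Banach function norms \cite[Chap.~1]{BS88}, followed by density of $L^2(\R)\cap X(\R)$ (Lemma~\ref{le:density}), is sound. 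What each approach buys: yours is self-contained and more elementary, needing only the Cauchy singular integral operator rather than its maximally modulated version; the machinery of \cite{K15} yields a stronger pointwise estimate with further applications (e.g.\ to pseudodifferential operators), which is why the paper can dispose of the theorem by citation. Two harmless bookkeeping points: since $a$ is defined pointwise, your step functions satisfy $\|a_n\|_{L^\infty(\R)}\le\sup_{\R}|a|$, which may exceed $\|a\|_{L^\infty(\R)}$ (e.g.\ $a=\chi_{\{0\}}$), but $\sup_{\R}|a|\le\|a\|_{L^\infty(\R)}+V(a)$, so your constant $c_X=1+d_X$ still works; and the sign ambiguity in $W^0(\operatorname{sgn})=\pm S$ is irrelevant to the norm estimate.
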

This result follows from \cite[Theorem~4.3]{K15}.

For Lebesgue spaces $L^p(\R)$, $1<p<\infty$, inequality~\eqref{eq:Stechkin} is
usually called Stechkin's inequality, and the constant $c_{L^p}$ is
calculated explicitly:
\begin{equation}\label{eq:constant-in-Stechkin}
c_{L^p}=\|S\|_{\cB(L^p(\R))}=\left\{\begin{array}{ccc}
\tan\left(\frac{\pi}{2p}\right) &\mbox{if}& 1<p\le 2,
\\[3mm]
\cot\left(\frac{\pi}{2p}\right) &\mbox{if}& 2\le p<\infty,
\end{array}\right.
\end{equation}
where $S$ is the Cauchy singular integral operator given by
\begin{equation}\label{eq:Cauchy-singular-integral-operator}
(Sf)(x):=\frac{1}{\pi i}\lim_{\eps\to 0}\int_{\R\setminus(x-\eps,x+\eps)}
\frac{f(t)}{t-x}\,dt.
\end{equation}
We refer to \cite[Theorem~2.11]{D79} for the proof of \eqref{eq:Stechkin}
in the case of Lebesgue spaces $L^p(\R)$ with $c_{L^p}=\|S\|_{\cB(L^p(\R))}$
and to \cite[Chap. 13, Theorem 1.3]{GK92} for the calculation of the norm
of $S$ given in the second equality in \eqref{eq:constant-in-Stechkin}.
\subsection{Continuous embedding of the algebra of multipliers
\boldmath{$\cM_{X(\R)}$} into \boldmath{$L^\infty(\R)$}}
One of the main ingredients of the proof of Theorem~\ref{th:main} is the
following.
\begin{theorem}\label{th:continuous-embedding}
Let $X(\R)$ be a separable Banach function space such that the
Hardy-Littlewood maximal operator $M$ is bounded on $X(\R)$ and on its
associate space $X'(\R)$. If $a\in\cM_{X(\R)}$, then
\begin{equation}\label{eq:continuous-embedding}
\|a\|_{L^\infty(\R)}\le\|a\|_{\cM_{X(\R)}}.
\end{equation}
\end{theorem}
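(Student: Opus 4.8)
The plan is to deduce the pointwise bound $\|a\|_{L^\infty(\R)}\le\|a\|_{\cM_{X(\R)}}$ from the $L^2$-theory via a local comparison of the $X(\R)$-norm with the $L^2(\R)$-norm, exploiting the Stechkin-type inequality (Theorem~\ref{th:Stechkin}) to produce an appropriate supply of test functions. I would begin by recalling that for $a\in L^\infty(\R)$ the operator $W^0(a)=F^{-1}aF$ is bounded on $L^2(\R)$ with $\|W^0(a)\|_{\cB(L^2(\R))}=\|a\|_{L^\infty(\R)}$, so it suffices to show that the $\cM_{X(\R)}$-norm dominates the $\cM_{L^2(\R)}$-norm, i.e.\ that $\|a\|_{L^\infty(\R)}\le\|a\|_{\cM_{X(\R)}}$ whenever $a\in\cM_{X(\R)}$.

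The key idea is a dilation/localization argument. Fix $\eps>0$ and choose a point where $|a|$ is nearly its essential supremum; by approximating $a$ in $L^\infty$-norm on a small neighbourhood one reduces (using Theorem~\ref{th:Stechkin} to control the error, since a smooth truncation of finite variation can be inserted at negligible cost) to testing $W^0(a)$ against functions $f$ whose Fourier transforms are concentrated near that point. Concretely, for $f\in C_0^\infty(\R)$ and $\lambda>0$ set $f_\lambda(x)=f(\lambda x)$; then $W^0(a)f_\lambda$ is, up to the change of variables $x\mapsto\lambda x$, the operator $W^0(a_\lambda)f$ with $a_\lambda(\xi)=a(\xi/\lambda)$, and as $\lambda\to 0$ the symbol $a_\lambda$ becomes, locally in frequency, essentially constant near $\xi=0$. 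The inequality $\|W^0(a)f_\lambda\|_{X(\R)}\le\|a\|_{\cM_{X(\R)}}\|f_\lambda\|_{X(\R)}$ combined with the behaviour of the $X(\R)$-norm under dilations (here one uses the lattice property (A2), (A4) and the density from Lemma~\ref{le:density} rather than any exact homogeneity, since $X(\R)$ need not be dilation-invariant) allows one to cancel the $X(\R)$-norms in a limiting procedure and recover the $L^2$-multiplier bound.

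The main obstacle I anticipate is the absence of dilation invariance and the lack of any simple relation between $\|\cdot\|_{X(\R)}$ and $\|\cdot\|_{L^2(\R)}$: a general Banach function space with $M$ bounded on $X(\R)$ and $X'(\R)$ need not be rearrangement-invariant, let alone homogeneous. To circumvent this I would not dilate the space but instead use that $M$ bounded on both $X(\R)$ and $X'(\R)$ places $X(\R)$ between $L^\infty$-type and $L^1$-type behaviour locally — in particular, on any fixed bounded interval the $X(\R)$-norm and the $L^2(\R)$-norm are comparable (up to constants depending on the interval) by (A4), (A5) and Hölder's inequality in the form $\int_E|fg|\le\|f\|_{X}\|g\chi_E\|_{X'}$ together with local integrability of $\chi_E$ in $X'(\R)$. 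Thus one works with a single fixed test function $f\in C_0^\infty(\R)$ supported in, say, $[-1,1]$, translates its frequency content by modulation $f\mapsto e^{it_0\cdot}f$ (which only shifts the symbol), and uses the finite-variation approximation of $a$ near the chosen frequency to replace $a$ by a constant plus a term of small $V(\R)$-norm, whose contribution is controlled by Theorem~\ref{th:Stechkin}. Letting the approximation improve forces $|a(t_0)|\le\|a\|_{\cM_{X(\R)}}$ at (almost) every Lebesgue point $t_0$, which is the claimed inequality.
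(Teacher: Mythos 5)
Your proposal has genuine gaps at the two places where it would have to do real work. First, the claimed local norm comparison is false: properties (A4), (A5) and H\"older's inequality only give the one-sided embeddings $L^\infty(E)\hookrightarrow X(\R)|_E\hookrightarrow L^1(E)$ for a bounded interval $E$; they do \emph{not} make $\|\cdot\|_{X(\R)}$ comparable to $\|\cdot\|_{L^2(\R)}$ on functions supported in $E$ (take $X(\R)=L^p(\R)$ with $p\ne 2$). Moreover, even if such a two-sided comparison held, it would not transfer the operator bound to $L^2$: for $f$ supported in $[-1,1]$ the function $W^0(a)f$ is not compactly supported, so a local comparison of norms controls neither $\|W^0(a)f\|_{L^2(\R)}$ from above nor $\|W^0(a)f\|_{X(\R)}$ from below, and the inequality to be proved has constant exactly $1$, which no argument with interval-dependent constants can deliver. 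Second, the localization in frequency does not work for a general symbol: at a Lebesgue point $t_0$ the difference $a-a(t_0)$ is small only in integral mean over shrinking intervals, not in $L^\infty$, whereas $\|b\|_V\ge\|b\|_{L^\infty(\R)}$, so there is no decomposition of $a$ near $t_0$ into a constant plus a term of small $V(\R)$-norm. Theorem~\ref{th:Stechkin} is an upper bound and cannot absorb such an error; and the dilation argument you start from uses the exact homogeneity of the $L^p$-norm under $x\mapsto\lambda x$, which, as you yourself note, is unavailable here, so after the substitute step collapses no mechanism remains to produce the needed lower bound $\|W^0(a)f\|_{X(\R)}\gtrsim|a(t_0)|\,\|f\|_{X(\R)}$ for suitable test functions $f$.

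For comparison, the paper does not prove this embedding from first principles: it invokes \cite[Theorems~3.8 and 3.9]{KS14} to get the boundedness of the Cauchy singular integral operator $S$ on $X(\R)$ and hence the condition \eqref{eq:AX}, and then quotes \cite[inequality~(1.2) and Corollary~4.2]{KS18}, where the genuinely hard part --- constructing test functions adapted to a general Banach function space satisfying \eqref{eq:AX} so that the multiplier norm dominates local averages of the symbol, with constant $1$ --- is carried out. If you want a self-contained argument you would have to reproduce that machinery (or restrict to dilation-invariant spaces, where your first sketch is the classical $L^p$ proof); as written, the proposal does not close these gaps.
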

\begin{proof}
By \cite[Theorem~3.8]{KS14}, the Cauchy singular integral operator $S$
given by \eqref{eq:Cauchy-singular-integral-operator} is bounded on the
space $X(\R)$. Then, in view of \cite[Theorem~3.9]{KS14},
\begin{equation}\label{eq:AX}
\sup_{-\infty<a<b<\infty}
\frac{1}{b-a}\|\chi_{(a,b)}\|_{X(\R)}\|\chi_{(a,b)}\|_{X'(\R)}<\infty.
\end{equation}
If \eqref{eq:AX} is fulfilled, then inequality \eqref{eq:continuous-embedding}
follows from \cite[inequality (1.2) and Corollary~4.2]{KS18}.
\end{proof}
\section{\textbf{Proof of the main result}}\label{sec:proofs}
\subsection{Strong convergence to zero of convolution operators whose
symbols are characteristic functions of segments shrinking to a point}
For every $t\in\R$ and every $\delta>0$, let $\chi_{t,\delta}$ be the
characteristic function of the segment $[t-\delta,t+\delta]$
and let $\chi_{t,\delta}^*$ be the characteristic function of
$\R\setminus[t-\delta,t+\delta]$.
We will denote by $\operatornamewithlimits{s-lim}\limits_{n\to\infty}A_n$ the 
strong limit of a sequence of operators $\{A_n\}_{n\in\N}\subset\cB(X(\R))$.
\begin{theorem}\label{th:strong-convergence-to-zero}
Let $X(\R)$ be a separable Banach function space. If the Hardy-Littlewood
maximal operator $M$ is bounded on the space $X(\R)$ and on its associate
space $X'(\R)$, then for every point $t\in\R$ and every sequence
$\{\delta_n(t)\}_{n\in\N}$ of positive numbers such that
\begin{equation}\label{eq:strong-convergence-to-zero-1}
\lim_{n\to\infty}\delta_n(t)=0,
\end{equation}
we have
\begin{equation}\label{eq:strong-convergence-to-zero-2}
\operatornamewithlimits{s-lim}_{n\to\infty} W^0(\chi_{t,\delta_n(t)})=0
\end{equation}
on the space $X(\R)$.
\end{theorem}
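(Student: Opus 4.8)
The plan is to combine a uniform-boundedness reduction with an explicit decomposition of $W^0(\chi_{t,\delta})$ into the convolution operator $P:=W^0(\chi_{[0,\infty)})$ sandwiched between modulations. First I would note that, since $\chi_{t,\delta}$ has total variation $2$ and $\|\chi_{t,\delta}\|_{L^\infty(\R)}=1$, Theorem~\ref{th:Stechkin} gives $\|W^0(\chi_{t,\delta})\|_{\cB(X(\R))}\le 3c_X$ for all $t\in\R$ and all $\delta>0$; in particular the sequence $\{W^0(\chi_{t,\delta_n(t)})\}_{n\in\N}$ is uniformly bounded on $X(\R)$. By the Banach--Steinhaus theorem it is then enough to verify \eqref{eq:strong-convergence-to-zero-2} on a dense subset of $X(\R)$, and by Lemma~\ref{le:density} we may take $f\in C_0^\infty(\R)$ with $\operatorname{supp}f\subset[-R,R]$.

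Next I would reduce to the case $t=0$. Writing $e_s(x):=e^{isx}$ and using the elementary intertwining relation $W^0(a(\cdot-s))=e_{-s}W^0(a)e_s$ (checked on $L^2(\R)$, hence valid on $C_0^\infty(\R)$) together with $\chi_{t,\delta}(\cdot)=\chi_{0,\delta}(\cdot-t)$, one obtains $W^0(\chi_{t,\delta})f=e_{-t}\,W^0(\chi_{0,\delta})(e_tf)$, so that $\|W^0(\chi_{t,\delta})f\|_{X(\R)}=\|W^0(\chi_{0,\delta})(e_tf)\|_{X(\R)}$ and $e_tf\in C_0^\infty(\R)$ has the same support as $f$. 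Thus it suffices to handle $t=0$. Using $\chi_{[-\delta,\delta]}=\chi_{[-\delta,\infty)}-\chi_{[\delta,\infty)}$ almost everywhere, the same intertwining relation, and the fact that $P:=W^0(\chi_{[0,\infty)})$ is bounded on $X(\R)$ (again by Theorem~\ref{th:Stechkin}, since $\|\chi_{[0,\infty)}\|_V=2$), I would write
\[
W^0(\chi_{0,\delta})f=e_\delta P\big((e_{-\delta}-1)f\big)+(e_\delta-e_{-\delta})\,Pf-e_{-\delta}P\big((e_\delta-1)f\big).
\]

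For the first and third terms, the bound $|e_{\pm\delta}(x)-1|\le\delta|x|\le\delta R$ on $\operatorname{supp}f$ and the lattice property give $\|(e_{\pm\delta}-1)f\|_{X(\R)}\le\delta R\|f\|_{X(\R)}$, so these terms are $O(\delta)$ and vanish as $\delta=\delta_n(0)\to0$ by \eqref{eq:strong-convergence-to-zero-1}. The middle term equals $2i\sin(\delta\,\cdot)\,Pf$ with $Pf\in X(\R)$ fixed. Here I would use that a separable Banach function space has absolutely continuous norm (Lemma~\ref{le:absolute-coninuity-versus-separability}): given $\eps>0$, pick $N$ with $\|Pf\cdot\chi_{\{|x|>N\}}\|_{X(\R)}<\eps$ (possible since $\chi_{\{|x|>N\}}\to0$ a.e.\ as $N\to\infty$), and estimate $|2\sin(\delta x)Pf(x)|\le 2\delta N\,|Pf(x)|$ on $\{|x|\le N\}$ and $\le 2|Pf(x)|$ on $\{|x|>N\}$, which yields $\|2\sin(\delta\,\cdot)Pf\|_{X(\R)}\le 2\delta N\|Pf\|_{X(\R)}+2\eps$, hence less than $3\eps$ for $\delta$ small. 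Combining the three estimates gives $\|W^0(\chi_{0,\delta_n(0)})f\|_{X(\R)}\to0$, and with the first two steps this proves \eqref{eq:strong-convergence-to-zero-2}.

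The uniform bound and the density reduction are routine. The one genuine point is the middle term $2i\sin(\delta\,\cdot)\,Pf$: pointwise decay to $0$ does not by itself control the $X(\R)$-norm because $\R$ has infinite measure, and it is precisely the absolutely continuous norm of $X(\R)$ (equivalently, its separability) that is used there — this is where the hypotheses on $X(\R)$ enter beyond Stechkin's inequality. Apart from that, the only other thing requiring a line of care is justifying the intertwining relation $W^0(a(\cdot-s))=e_{-s}W^0(a)e_s$ and the splitting $\chi_{[-\delta,\delta]}=\chi_{[-\delta,\infty)}-\chi_{[\delta,\infty)}$ at the level of Fourier multipliers, which is unproblematic since every symbol involved has finite total variation and hence lies in $\cM_{X(\R)}$ by Theorem~\ref{th:Stechkin}.
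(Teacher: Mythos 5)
Your proof is correct, but it follows a genuinely different route from the paper's. The paper also reduces, via the uniform bound from Theorem~\ref{th:Stechkin} and density of $C_0^\infty(\R)$, to showing $\|W^0(\chi_{t,\delta_n(t)})f\|_{X(\R)}\to 0$ for $f\in C_0^\infty(\R)$; but from there it argues by a direct kernel estimate: writing $2\pi(W^0(\chi_{t,\delta_n(t)})f)(x)=\int_{t-\delta_n(t)}^{t+\delta_n(t)}e^{-ix\tau}\psi(\tau)\,d\tau$ with $\psi=Ff\in S(\R)$, integrating by parts to get a pointwise majorant of the form $C\delta_n(t)\chi_{0,m}+C'|x|^{-1}\chi_{0,m}^*$, dominating $|x|^{-1}$ by $M\chi_{0,1}\in X(\R)$ (this is where the boundedness of $M$ on $X(\R)$ is used explicitly), and then invoking absolute continuity of the norm of $M\chi_{0,1}$ to kill the tail before sending $\delta_n(t)\to 0$. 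You instead use the algebraic identity $W^0(\chi_{0,\delta})=e_\delta Pe_{-\delta}-e_{-\delta}Pe_\delta$ with $P=W^0(\chi_{[0,\infty)})$ bounded by Theorem~\ref{th:Stechkin}, make the two commutator-type terms $O(\delta)$ via $|e^{\pm i\delta x}-1|\le\delta R$ on $\operatorname{supp}f$, and handle the middle term $2i\sin(\delta\,\cdot)Pf$ by the absolutely continuous norm of the fixed function $Pf$. Both arguments rest on the same three pillars (Stechkin, density of $C_0^\infty(\R)$, absolutely continuous norm coming from separability), and you correctly identify the middle term as the point where separability is indispensable. What your version buys is the avoidance of any explicit Fourier-integral or maximal-function majorant, funnelling all use of the $M$-boundedness hypotheses through Theorem~\ref{th:Stechkin}; what the paper's version buys is a self-contained estimate that works symbol-by-symbol and makes visible exactly how $M\chi_{0,1}\in X(\R)$ enters. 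Two cosmetic remarks: the relevant functional-analytic fact is the "uniform boundedness plus convergence on a dense set" lemma rather than the Banach--Steinhaus theorem itself (you supply the uniform bound $3c_X$ anyway, so nothing is missing), and the splitting $\chi_{[-\delta,\delta]}=\chi_{[-\delta,\infty)}-\chi_{[\delta,\infty)}$ holds only almost everywhere, which, as you note, is all that matters for multipliers.
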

\begin{proof}
Fix a point $t\in\R$ and a sequence $\{\delta_n(t)\}_{n\in\N}$ of positive
numbers satisfying \eqref{eq:strong-convergence-to-zero-1}. Since
$\chi_{t,\delta_n(t)}\in V(\R)$ for every $n\in\N$, it follows from
Theorem~\ref{th:Stechkin} that the operator $W^0(\chi_{t,\delta_n(t)})$
is bounded on the space $X(\R)$ for every $n\in\N$. By Lemma~\ref{le:density},
the set $C_0^\infty(\R)$ is dense in $X(\R)$. In view of this observation
and a well-known fact from Functional Analysis (see, e.g.,
\cite[Lemma~1.4.1(ii)]{RSS11}), in order to prove equality
\eqref{eq:strong-convergence-to-zero-2}, it is sufficient to show that for
every $f\in C_0^\infty(\R)$,
\begin{equation}\label{eq:strong-convergence-to-zero-3}
\lim_{n\to\infty} \left\|W^0(\chi_{t,\delta_n(t)})f\right\|_{X(\R)}=0.
\end{equation}

Let $S(\R)$ denote the Schwartz space of rapidly decreasing functions. If
$f\in C_0^\infty(\R)\subset S(\R)$, then its Fourier transform $\psi:=Ff$
belongs to $S(\R)$ (see, e.g., \cite[Corollary~2.2.15]{G14}). Hence
\begin{equation}\label{eq:strong-convergence-to-zero-4}
K_0:=\max_{x\in\R}|\psi(x)|<\infty,
\quad
K_1:=\max_{x\in\R}|\psi'(x)|<\infty.
\end{equation}
If $x\ne 0$, then integrating by parts, we get for all $n\in\N$,
\begin{align}
2\pi(W^0(\chi_{t,\delta_n(t)})f)(x)
&=
\int_{t-\delta_n(t)}^{t+\delta_n(t)}e^{-ix\tau}\psi(\tau)\,d\tau
\nonumber\\
&=
\frac{e^{-ix\tau}\psi(\tau)}{-ix}
\Bigg|_{\tau=t-\delta_n(t)}^{\tau=t+\delta_n(t)}
+
\frac{1}{ix}\int_{t-\delta_n(t)}^{t+\delta_n(t)}e^{-ix\tau}\psi'(\tau)\,d\tau.
\label{eq:strong-convergence-to-zero-5}
\end{align}
It follows from \eqref{eq:strong-convergence-to-zero-4} and
\eqref{eq:strong-convergence-to-zero-5} that for $x\in\R$ and $m,n\in\N$,
\begin{align}
&
2\pi\left|(W^0(\chi_{t,\delta_n(t)})f)(x)\right|
\nonumber\\
&\quad
=
\left|
\int_{t-\delta_n(t)}^{t+\delta_n(t)}e^{-ix\tau}\psi(\tau)\,d\tau
\right|\chi_{0,m}(x)
+
\left|
\int_{t-\delta_n(t)}^{t+\delta_n(t)}e^{-ix\tau}\psi(\tau)\,d\tau
\right|\chi_{0,m}^*(x)
\nonumber\\
&\quad
\le
\left(\int_{t-\delta_n(t)}^{t+\delta_n(t)}|\psi(\tau)|\,d\tau\right)
\chi_{0,m}(x)
\nonumber\\
&\quad\quad
+
\left(\frac{|\psi(t+\delta_n(t))|+|\psi(t-\delta_n(t))|}{|x|}
+
\frac{1}{|x|}\int_{t-\delta_n(t)}^{t+\delta_n(t)}|\psi'(\tau)|\,d\tau\right)
\chi_{0,m}^*(x)
\nonumber\\
&\quad
\le
2K_0\delta_n(t)\chi_{0,m}(x)
+
\left(\frac{2K_0}{|x|}+\frac{2K_1\delta_n(t)}{|x|}\right)
\chi_{0,m}^*(x).
\label{eq:strong-convergence-to-zero-6}
\end{align}
By \cite[Example~2.1.4]{G14}, if $|x|>1$, then
\begin{equation}\label{eq:strong-convergence-to-zero-7}
(M\chi_{0,1})(x)=\frac{2}{|x|+1}\ge\frac{1}{|x|}.
\end{equation}
It follows from \eqref{eq:strong-convergence-to-zero-6}--%
\eqref{eq:strong-convergence-to-zero-7} and the lattice property of the norm
of $X(\R)$ that for all $m,n\in\N$,
\begin{align}
\left\|(W^0(\chi_{t,\delta_n(t)})f)(x)\right\|_{X(\R)}
\le &
\frac{K_0\delta_n(t)}{\pi}\left\|\chi_{0,m}\right\|_{X(\R)}
\nonumber\\
&+
\frac{K_0+K_1\delta_n(t)}{\pi}
\left\|\chi_{0,m}^*(M\chi_{0,1})\right\|_{X(\R)}.
\label{eq:strong-convergence-to-zero-8}
\end{align}
Since the Hardy-Littlewood maximal operator $M$ is bounded on $X(\R)$
and since $\chi_{0,1}\in X(\R)$, we see that $M\chi_{0,1}\in X(\R)$. By
Lemma~\ref{le:absolute-coninuity-versus-separability}, the function
$M\chi_{0,1}$ has absolutely continuous norm in $X(\R)$.

Fix $\eps>0$. It is clear that $\chi_{0,m}^* \to 0$  a.e.
on $\R$ as $m\to\infty$. It follows from the absolute continuity of
the norm of $M\chi_{0,1}\in X(\R)$ that there exists an $m\in\N$ such that
\begin{equation}\label{eq:strong-convergence-to-zero-9}
\frac{K_0+1}{\pi}
\left\|\chi_{0,m}^*(M\chi_{0,1})\right\|_{X(\R)}
<\frac{\eps}{2}.
\end{equation}
On the other hand, it follows from \eqref{eq:strong-convergence-to-zero-1}
that there exists an $N\in\N$ such that for all $n>N$,
\begin{equation}\label{eq:strong-convergence-to-zero-10}
K_1\delta_n(t)<1
\end{equation}
and
\begin{equation}\label{eq:strong-convergence-to-zero-11}
\frac{K_0\delta_n(t)}{\pi}
\left\|\chi_{0,m}\right\|_{X(\R)} <\frac{\eps}{2}.
\end{equation}
Combining \eqref{eq:strong-convergence-to-zero-8}--%
\eqref{eq:strong-convergence-to-zero-11}, we see that
\[
\left\|(W^0(\chi_{t,\delta_n(t)})f)(x)\right\|_{X(\R)}<\eps
\quad\mbox{for all}\quad n>N,
\]
which implies \eqref{eq:strong-convergence-to-zero-3} for all
$f\in C_0^\infty(\R)$, and this completes the proof of
\eqref{eq:strong-convergence-to-zero-2}.
\end{proof}
\subsection{Proof of Theorem~\ref{th:main}}
Suppose $a\in\cM_{X(\R)}$. If $a=0$, then $W^0(a)$ is the zero operator,
whence it is compact.

Now assume that $W^0(a)$ is compact on the space $X(\R)$.
Consider an arbitrary segment $[x_0,y_0]\subset\R$. By
Theorem~\ref{th:strong-convergence-to-zero}, for every point $t\in[x_0,y_0]$
and every sequence of positive numbers $\{\delta_n(t)\}_{n\in\N}$
such that $\delta_n(t)\to 0$ as $n\to\infty$, we get
equality \eqref{eq:strong-convergence-to-zero-2} on the space $X(\R)$.
Since the operator $W^0(a)$ is compact, it follows from
\eqref{eq:strong-convergence-to-zero-2}
and \cite[Lemma~1.4.7]{RSS11} that for every $t\in[x_0,y_0]$,
\[
\lim_{n\to\infty}\left\|W^0(\chi_{t,\delta_n(t)}a)\right\|_{\cB(X(\R))}
=
\lim_{n\to\infty}\left\|W^0(\chi_{t,\delta_n(t)})W^0(a)\right\|_{\cB(X(\R))}
=0.
\]
Fix an arbitrary $\eps>0$. Then for every $t\in[x_0,y_0]$ there exists
an $n(t)\in\N$ such that
\[
\left\|W^0(\chi_{t,\delta_{n(t)}(t)}a)\right\|_{\cB(X(\R))}<\eps.
\]
Since $[x_0,y_0]$ is a compact set, we can extract a finite subcovering
$\gamma_1,\dots,\gamma_m$ from the open covering
\[
\bigcup_{t\in[x_0,y_0]}(t-\delta_{n(t)}(t),t+\delta_{n(t)}(t))
\]
of $[x_0,y_0]$, where each interval $\gamma_j$ is of the form
\[
\gamma_j=(t_j-\delta_{n(t_j)}(t_j),t_j+\delta_{n(t_j)}(t_j))
\]
and $t_j\in[x_0,y_0]$ for all $j\in\{1,\dots,m\}$. Thus
\begin{equation}\label{eq:proof-1}
\left\|W^0(\chi_{\gamma_j}a)\right\|_{\cB(X(\R))}<\eps
\quad\mbox{for all}\quad j\in\{1,\dots,m\}.
\end{equation}
By Theorem~\ref{th:continuous-embedding}, for all $j\in\{1,\dots,m\}$,
\begin{equation}\label{eq:proof-2}
\|\chi_{\gamma_j}a\|_{L^\infty(\R)}\le\|\chi_{\gamma_j}a\|_{\cM_{X(\R)}}
=
\left\|W^0(\chi_{\gamma_j}a)\right\|_{\cB(X(\R))}.
\end{equation}
Since $[x_0,y_0]\subset\gamma_1\cup\dots\cup\gamma_m$, taking into account
\eqref{eq:proof-1} and \eqref{eq:proof-2}, we get
\[
\left\|\chi_{[x_0,y_0]}a\right\|_{L^\infty(\R)}
\le
\|\chi_{\gamma_1\cup\dots\cup\gamma_m}a\|_{L^\infty(\R)}
=
\max_{1\le j\le m}\|\chi_{\gamma_j}a\|_{L^\infty(\R)}<\eps.
\]
Passing to the limit as $\eps\to 0$ in the above inequality, we see that
$a(t)=0$ for almost all $t\in[x_0,y_0]$. Since $[x_0,y_0]\subset\R$ was
chosen arbitrarily, we conclude that $a=0$ almost everywhere on $\R$.
\qed
\\
\\
{\bf Acknowledgments.}
This work was partially supported by the Funda\c{c}\~ao para a Ci\^encia e a
Tecnologia (Portu\-guese Foundation for Science and Technology)
through the project
UID/MAT/00297/2013 (Centro de Matem\'atica e Aplica\c{c}\~oes).


\end{document}